
\documentclass{birkmult}

\usepackage{color}
\usepackage{verbatim}

%
%
%
 \newtheorem{thm}{Theorem}[section]
 
 \newtheorem{lem}[thm]{Lemma}
 
 \theoremstyle{definition}
 
 \theoremstyle{remark}
 \newtheorem{rem}[thm]{Remark}
 
 \numberwithin{equation}{section}
 \newcommand{\F}{\mathcal{F}}

\begin{document}
\setlength{\baselineskip}{16pt}
%
%
%
%
%
%
%
%
%
\title[Fundamental solutions for higher order Schr\"odinger equations]
{Global estimates of fundamental solutions for higher-order
Schr\"odinger equations}
\author{JinMyong Kim}

\address[Current Address]{%
: Institut f\"{u}r Analysis und Scientific Computing, \\
Technische Universit\"{a}t Wien\\
Wiedner Hauptstr. 8, A-1040 Wien, Austria;\\[2mm]
(Permanent Address) : Department of Mathematics,\\
Kim Il Sung University\\
Pyongyang,  DPR Korea;} \email{jinjm39@yahoo.com.cn}

\author{Anton Arnold}

\address{Institut f\"{u}r Analysis und Scientific Computing, \\
Technische Universit\"{a}t Wien\\
Wiedner Hauptstr. 8, A-1040 Wien, Austria;} \email{anton.arnold@tuwien.ac.at}

\author{Xiaohua Yao }
\address{ Department of Mathematics\\
Central China Normal University \\
Wuhan 430079, P. R. China;} \email{yaoxiaohua@mail.ccnu.edu.cn} 

\thanks{This work was supported by the Postdoctoral Science Foundation of Huazhong University of Science and Technology in China and the Eurasia-Pacific Uninet scholarship for post-docs in Austria.
The second author was supported by the FWF (project I 395-N16). {The third author
was supported by NSFC (No. 10801057), the Key Project of Chinese
Ministry of Education (No. 109117), NCET-10-0431, and  CCNU Project (No. CCNU09A02015)}}



\subjclass{ 42B20; 42B37; 35Q41; 35B65}

\keywords{Oscillatory integral, higher-order Schr\"odinger equation,
fundamental solution estimate}

\date{September 4, 2011}
\dedicatory{}

\begin{abstract}
  In this paper we first establish global pointwise time-space
estimates of the fundamental solution for Schr\"odinger equations,
 where the symbol of the spatial operator is a real non-degenerate elliptic
polynomial. Then we use such estimates to establish
related $L^p-L^q$ estimates on the Schr\"odinger solution. These estimates extend known results from the literature and are sharp. This result was lately already generalized
to a degenerate case (cf. \cite{DY}).\end{abstract}

\maketitle


\vspace{-5pt}

\section{Introduction} \label{S1}

In this paper we are interested in $L^p$-$L^q$ estimates of solutions
for the following Schr\"odinger equation:
\begin{equation} \label{eq:11}
\frac{\partial u}{\partial t}=iP(D)u,  u(0,\cdot)=u_0\in L^p({\bf
R}^n),
\end{equation}
where $D=-i(\partial/\partial x_1,\cdots,\partial/\partial x_n)$,
$P:{\bf R}^n\longrightarrow {\bf R}$ is a non-degenerate real elliptic
polynomial of the even order $m$. In the sequel, we may assume
without loss of generality that $P_m(\xi)>0$ for $\xi\ne0$ where
$P_m(\xi)$ is the principal part of $P(\xi)$. The non-degeneracy
condition on the polynomial $P$ reads as follows.

(a) For any $\xi\in {\bf R}^n\backslash\{0\}$ the Hessian
$$
\left(\frac{\partial^{2}}{\partial\xi_{i}\partial\xi_{j}}P_m(\xi)\right)
$$
is non-degenerate.

For an elliptic polynomial $P$, condition (a) is equivalent to the
following condition (see \cite{BE}):

(b) For any $z\in {\bf S}^{n-1}$(the unit sphere of ${\bf R}^n$), the function
on $ {\bf S}^{n-1}$ $\psi(\omega):=$ $<z,\omega>(P_m(\omega))^{-1/m}$, where
$\omega\in  {\bf S}^{n-1}$, is non-degenerate at its critical points.
 This means, if $ d_{\omega}\psi$, the differential of $\psi$ at a point $\omega\in {\bf S}^{n-1}$
vanishes, then $ d_{\omega}^{2}\psi$, the second order differential of $\psi$ at this point
is non-degenerate.

For every initial data $u_{0}\in  S({\bf R}^n)$ (the Schwarz space),
the solution of the Cauchy problem \eqref{eq:11} is given by
$$
u(t,\cdot)=e^{itP(D)}u_{0}= {\F}^{-1}(e^{itP})\ast u_{0},
$$
where ${\F}$ denotes the Fourier transform, $ {\F}^{-1}$ its
inverse, and $ {\F}^{-1}(e^{itP})$ is understood in the
distributional sense. From the  ellipticity assumption on $P$, it
is easy to find that $I(t,x):={\F}^{-1}(e^{itP})(x)$ is an infinitely
differentiable function in the $x$ variable for every fixed $t\neq 0$ (see
\cite{C2}).

When the symbol $P$ is homogeneous, Miyachi \cite{Miy} and Zheng et al. \cite{ZYF}
considered the pointwise estimates of the oscillatory integral $I$ and the $L^p-L^q$
estimates of the operator $e^{itP(D)}(t\neq 0)$.
Dropping the homogeneity of $P$, Balabane et al. \cite{BE} and Cui \cite{C1,C2}
studied the same estimates under the above non-degeneracy condition. We remark that the results of Balabane et al. are not sharp, while those of Cui are sharp estimates,  but under the assumption of local $t$, i.e. $0<|t|<T$. Here,
sharpness means that the decay rate in the spatial variable is identical with that in the homogeneous case, namely,
the decay rate is $-\frac{n(m-2)}{2(m-1)}$ (see
\cite{ZYF}).

The purpose of this paper is to prove global
pointwise time-space estimates and $L^p-L^q$ estimates of the
fundamental solution of \eqref{eq:11} for
all $|t|>0$.
Our proof depends heavily on a decay estimate for the oscillatory
integral ${\F}^{-1}(e^{itP})$. Compared with previous papers (see
\cite{BE,C2,KPV,YZ,ZYF}), we estimate the oscillatory integral with two parameters,
i.e. both the time variable and the spatial variable simultaneously. So we
obtain the sharp decay in the spatial variable, even for $|t|$ large.
Recently, our result was already generalized in \cite{DY}. 
But since the method applied there is different, this paper provides an alternative approach.

This paper is organized as follows. In Section \ref{S2}, we make some
pretreatment of the oscillatory integral ${\F}^{-1}(e^{itP})$,
review the method of Balabane et al. \cite{BE} and Cui \cite{C2}, and present
some necessary lemmata. In Section \ref{S3}, we prove global
pointwise time-space estimates of the fundamental solution of
\eqref{eq:11} which is our main result. Finally, in \S\ref{S4}
we use them to obtain the related  $L^p-L^q$ estimates for the Schr\"odinger solution.

\section{Preliminaries} \label{S2}

Throughout this paper, we assume that $P:{\bf R}^n\to{\bf R}$ is
always a non-degenerate elliptic inhomogeneous polynomial of order
$m$ where $n\ge2$ and $m$ is even. It is clear that $P$ is
non-degenerate if and only if det$(\partial_i\partial_j
P(\xi))_{n\times n}$ is an elliptic polynomial of order $n(m-2)$,
which is also equivalent to $(H2)$ in \cite{BE}, i.e. our condition (b).

We denote by ${\bf S}^{n-1}$ the unit sphere in ${\bf R}^n$, and by
$(\rho,\omega)\in[0,\infty) \times{\bf S}^{n-1}$ the polar
coordinates in ${\bf R}^n$. By the conditions on $P$,  we know that
$P_m(\xi)>0$ for $\xi\ne0$, which implies that there exists a large constant
$a>0$ with: For each fixed $s\ge a$ and $\omega\in{\bf
S}^{n-1}$, the equation $P(\rho\omega)=s$  has an unique positive solution $\rho=\rho(s,\omega)\in
C^\infty([a,\infty)\times{\bf S}^{n-1})$. By Lemma 2 in \cite{BE} we have
\begin{equation} \label{eq:21}
\rho(s,\omega)=s^{1\over m}(P_{m}(\omega))^{-{1\over m}}
+\sigma(s,\omega),
\end{equation}
where $\sigma$ lies in the symbol class $ S^0_{1,0}([a,\infty)\times{\bf S}^{n-1})$ (cf.  \cite{DY}), i.e.
$\sigma\in C^\infty ([a,\infty)\times{\bf S}^{n-1})$. Moreover for every
$k\in{\bf N}_0:=\{0,1,2,\cdots\}$ and every differential operator
$L_\omega$ on ${\bf S}^{n-1}$ there exists a constant $C_{kL}$ such
that
\begin{equation} \label{eq:22}
|\partial^k_s L_\omega\sigma(s,\omega)|\le C_{kL}(1+s)^{-k}
\quad{\rm for}\ s\ge a\ {\rm and}\ \omega\in{\bf S}^{n-1}.
\end{equation}
\vskip0.3cm We now recall two lemmata (see \cite{BE,C2}) on the
estimates of the following phase function
$$
\phi(s,\omega):=s^{-{1\over m}}\rho(s,\omega)\langle u,\omega
\rangle\quad{\rm for}\ s\ge a\ {\rm and}\ \omega\in{\bf S}^{n-1},
$$
with any fixed $u\in{\bf S}^{n-1}$. Since for every fixed $u_0\in{\bf
S}^{n-1}$ there exists a sufficiently small neighborhood
$U_{u_0}\subset{\bf S}^{n-1}$ of $u_0$ such that the following
lemmata always hold uniformly in $u\in U_{u_0}$(i.e. the constants in Lemma \ref{lem:21} and Lemma \ref{lem:22} are independent of $u$) we do not put the
variable $u$ in the function $\phi$. Clearly, $\phi\in
S^0_{1,0}([a,\infty)\times{\bf S}^{n-1})$.

\begin{lem} \label{lem:21}
There exists a constant $a_0\ge a$ and an open cover
$\{\Omega_0,\Omega_{\mbox{\tiny +}},\Omega_{\mbox{-}}\}$ of ${\bf
S}^{n-1}$ with $\Omega_{\mbox{\tiny
+}}\cap\Omega_{\mbox{-}}=\emptyset$ such that for $s\ge a_0$,

(a) The function $\Omega_0\ni\omega\mapsto\phi(s,\omega)$ has no
critical points, and
\begin{equation} \label{eq:23}
\|d_{\omega}\phi(s,\omega)\|\ge c>0\quad{\rm for}\
\omega\in\Omega_0,
\end{equation}
where the constant $c$ is independent of $s$.

(b) The function $\Omega_{\mbox{\tiny
$\pm$}}\ni\omega\mapsto\phi(s,\omega)$ has a unique critical point\\
$\omega_{\mbox{\tiny $\pm$}}\in C^\infty([a_0,\infty);
\Omega'_{\mbox{\tiny $\pm$}})$ for some open subset
$\Omega'_{\mbox{\tiny $\pm$}}$ with $\overline{\Omega'}_{\mbox{\tiny
$\pm$}}\subset\Omega_{\mbox{\tiny $\pm$}}$, respectively. Furthermore
\begin{equation} \label{eq:24}
\|(d^2_{\omega}\phi(s,\omega))^{-1}\|\le c_0 \quad{\rm for}\
\omega\in\Omega_{\mbox{\tiny $\pm$}},
\end{equation}
where the constant $c_0$ is independent of $s$. Moreover,
$\lim_{s\to\infty}\omega_{\mbox{\tiny $\pm$}}(s)$ exist and
$$
|\omega_{\mbox{\tiny $\pm$}}^{(k)}(s)|\le c_k(1+s)^{-k-{1\over m}}
\quad{\rm for}\ k\in{\bf N}.
$$
\end{lem}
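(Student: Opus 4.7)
The plan is to treat $\phi$ as a small perturbation of its $s=\infty$ limit $\phi_0(\omega) := \langle u,\omega\rangle(P_m(\omega))^{-1/m}$, which is precisely the function $\psi$ from condition (b), and to deduce the claims from the structure of $\phi_0$ via the implicit function theorem. Substituting \eqref{eq:21} into the definition of $\phi$ gives
\[
\phi(s,\omega) = \phi_0(\omega) + r(s,\omega), \qquad r(s,\omega):= s^{-1/m}\sigma(s,\omega)\langle u,\omega\rangle,
\]
and the symbol bound \eqref{eq:22} propagates to $r$ as $|\partial_s^k L_\omega r(s,\omega)| \le C_{k,L}(1+s)^{-1/m-k}$; in particular $\|d_\omega r\|_{C^0({\bf S}^{n-1})}$ and $\|d_\omega^2 r\|_{C^0({\bf S}^{n-1})}$ tend to $0$ as $s\to\infty$, while $\|\partial_s d_\omega r\|_{C^0}=O(s^{-1-1/m})$.

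Next I analyze $\phi_0$ on ${\bf S}^{n-1}$. Condition (b) makes every critical point of $\phi_0$ non-degenerate, hence isolated. Geometrically, $d_\omega\phi_0(\omega)=0$ says that the outward unit normal of the level surface $\{P_m=1\}$ at $\omega/(P_m(\omega))^{1/m}$ is parallel to $\pm u$; since assumption (a) yields a nowhere-vanishing Gauss--Kronecker curvature of $\{P_m=1\}$, its Gauss map is a local, hence global diffeomorphism onto ${\bf S}^{n-1}$, so $\phi_0$ has exactly two critical points $\omega_\pm^0 = \pm\omega_+^0$, depending smoothly on $u$. I pick disjoint open $\Omega_\pm\ni\omega_\pm^0$ on which $\|(d_\omega^2\phi_0)^{-1}\|\le c_0/2$, smaller open $\Omega'_\pm$ with $\overline{\Omega'_\pm}\subset\Omega_\pm$, and set $\Omega_0:={\bf S}^{n-1}\setminus(\overline{\Omega'_+}\cup\overline{\Omega'_-})$; on the compact $\Omega_0$, $\phi_0$ is critical-point free, so $\|d_\omega\phi_0\|\ge 2c>0$ by a continuity argument.

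Using the smallness from the first step, I then take $a_0\ge a$ so large that for $s\ge a_0$ one has $\|d_\omega r(s,\cdot)\|_{C^0}\le c$ and $d_\omega^2\phi = d_\omega^2\phi_0 + d_\omega^2 r$ remains non-degenerate on $\overline{\Omega_\pm}$ with $\|(d_\omega^2\phi)^{-1}\|\le c_0$. On $\Omega_0$ the triangle inequality then gives $\|d_\omega\phi\|\ge\|d_\omega\phi_0\|-\|d_\omega r\|\ge c$, which is (a). On $\overline{\Omega_\pm}$ the Hessian $d_\omega^2\phi$ is invertible, so the implicit function theorem applied to $F(s,\omega):=d_\omega\phi(s,\omega)=0$ produces a unique smooth solution $\omega_\pm\in C^\infty([a_0,\infty);\Omega'_\pm)$.

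Finally, implicit differentiation of $d_\omega\phi(s,\omega_\pm(s))\equiv 0$ gives $\omega_\pm'(s) = -(d_\omega^2\phi)^{-1}\partial_s d_\omega\phi$; combining $\partial_s d_\omega\phi=\partial_s d_\omega r = O((1+s)^{-1-1/m})$ with the uniform Hessian bound yields the case $k=1$. A routine chain-rule induction in $s$ upgrades this to $|\omega_\pm^{(k)}(s)|\le c_k(1+s)^{-k-1/m}$, the dominant term at step $k$ being $\partial_s^k d_\omega r = O(s^{-k-1/m})$ while mixed terms involving $\omega_\pm^{(j)}$ with $j<k$ supply extra decay by the inductive hypothesis. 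Integrability of $\omega_\pm'$ at infinity then shows that $\lim_{s\to\infty}\omega_\pm(s)$ exists and must equal $\omega_\pm^0$. \emph{The main obstacle} is the second paragraph: verifying that the critical set of $\phi_0=\psi$ on ${\bf S}^{n-1}$ reduces to the antipodal pair $\{\pm\omega_+^0\}$, which is what makes a three-piece cover sufficient; after this geometric input, the remainder is symbol calculus plus a parametric implicit function theorem.
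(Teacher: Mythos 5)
The paper states this lemma without proof, citing Balabane--Emami-Rad \cite{BE} and Cui \cite{C2}, so there is no internal proof to compare against; your argument is an independent reconstruction along the standard lines of those references (write $\phi=\phi_0+r$ with $\phi_0=\langle u,\omega\rangle P_m(\omega)^{-1/m}$, analyse the $s$-independent limit, perturb via the implicit function theorem, then push symbol bounds through the chain rule). The steps you give for the perturbative part are sound: the propagated bound $|\partial_s^kL_\omega r|\le C(1+s)^{-1/m-k}$, the choice of $a_0$ and the cover by continuity, the formula $\omega_\pm'=-(d_\omega^2\phi)^{-1}\partial_sd_\omega\phi$, the chain-rule induction for $\omega_\pm^{(k)}$, and the existence of $\lim_{s\to\infty}\omega_\pm(s)$ from integrability of $\omega_\pm'$ are all correct.

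The step that needs more than you supply is exactly the one you yourself flag: why $\phi_0$ has precisely one antipodal pair of critical points. Your assertion that assumption (a) ``yields a nowhere-vanishing Gauss--Kronecker curvature of $\{P_m=1\}$'' is not immediate, because the Gauss--Kronecker curvature sees $\mathrm{Hess}\,P_m$ only restricted to the tangent hyperplanes, and a non-degenerate quadratic form can perfectly well have a degenerate restriction to a hyperplane (e.g.\ $x_1^2-x_2^2$ restricted to $\{x_1=x_2\}$). What closes this is that (a), ellipticity, and connectedness of ${\bf R}^n\setminus\{0\}$ force $\mathrm{Hess}\,P_m$ to be positive definite: at a direction $\omega_0$ minimizing $P_m$ on ${\bf S}^{n-1}$ the Hessian decouples (Euler's identity gives $\mathrm{Hess}\,P_m(\omega_0)\omega_0=(m-1)\nabla P_m(\omega_0)=m(m-1)P_m(\omega_0)\omega_0$, so the off-diagonal block vanishes) into a radial block $m(m-1)P_m(\omega_0)>0$ and a tangential block $\ge mP_m(\omega_0)I>0$; since the Hessian is non-degenerate on the connected set ${\bf R}^n\setminus\{0\}$, its signature cannot change, so it is positive definite everywhere. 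Hence $P_m$ is strictly convex, $\{P_m\le1\}$ is a convex body, and only then does the Gauss map argument (Hadamard) give exactly the antipodal pair $\pm\omega_+^0$. With this positive-definiteness step inserted, your proof is complete.
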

\vskip0.3cm
\begin{lem} \label{lem:22}
 Let $\phi_{\mbox{\tiny $\pm$}}(t,r,s)=st+rs^{1\over
m}\phi (s,\omega_{\mbox{\tiny $\pm$}}(s))$ for $t$, $r>0$ and $s\ge
a$. Then there exist constants $a_1\ge a_0$ and $c_2>c_1>0$ such
that for $s\ge a_1$, $t>0$, and $r>0$,
\begin{equation} \label{eq:25}
 c_1\le\pm\phi(s,\omega_{\mbox{\tiny
$\pm$}}(s))\le c_2,
\end{equation}
\begin{equation} \label{eq:26}
\partial_s\phi_{\mbox{\tiny +}}(t,r,s)\ge t+c_1rs^{{1\over
m}-1},
\end{equation}
\begin{equation} \label{eq:27}
t-c_2rs^{{1\over m}-1}\le\partial_s\phi_{\mbox{-}}(t,r,s)\le
t-c_1rs^{{1\over m}-1},
\end{equation}
\begin{equation} \label{eq:28}
c_1rs^{{1\over m}-2}\le|\partial^2_s\phi_{\mbox{-}}(t,r,s)|\le
c_2rs^{{1\over m}-2},
\end{equation}
 and
\begin{equation} \label{eq:29}
|\partial^k_s\phi_{\mbox{\tiny $\pm$}}(t,r,s)|\le c_2rs^{{1\over
m}-k}\quad{\rm for}\ k=2,3,\cdots.
\end{equation}
\end{lem}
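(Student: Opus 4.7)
The plan is to introduce $G_\pm(s):=s^{1/m}\phi(s,\omega_\pm(s))=\rho(s,\omega_\pm(s))\langle u,\omega_\pm(s)\rangle$, so that $\phi_\pm(t,r,s)=st+rG_\pm(s)$ and the five assertions \eqref{eq:25}--\eqref{eq:29} reduce to controlling $\phi(s,\omega_\pm(s))$, $G_\pm'(s)$, $G_-''(s)$, and $G_\pm^{(k)}(s)$ for $k\ge 2$. The key simplification I would exploit throughout is that $\omega_\pm(s)$ is a critical point of $\phi(s,\cdot)$, and hence of $\tilde\phi(s,\omega):=s^{1/m}\phi(s,\omega)=\rho(s,\omega)\langle u,\omega\rangle$. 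Consequently the chain rule for $G_\pm(s)=\tilde\phi(s,\omega_\pm(s))$ loses its $\omega_\pm'(s)$-term and collapses to $G_\pm'(s)=(\partial_s\tilde\phi)(s,\omega_\pm(s))$. Inserting the expansion \eqref{eq:21} for $\rho$ yields
\[
G_\pm'(s)=\tfrac{1}{m}\,s^{1/m-1}\psi(\omega_\pm(s))+\langle u,\omega_\pm(s)\rangle\,\partial_s\sigma(s,\omega_\pm(s)),
\]
with $\psi(\omega):=(P_m(\omega))^{-1/m}\langle u,\omega\rangle$; by \eqref{eq:22} the second summand is $O(s^{-1})$.

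I would next establish \eqref{eq:25}, the only place where a sign argument is required. Since $m$ is even, $P_m(-\omega)=P_m(\omega)$, hence $\psi(-\omega)=-\psi(\omega)$, so $\max_{{\bf S}^{n-1}}\psi>0>\min_{{\bf S}^{n-1}}\psi$. Letting $s\to\infty$ in the critical-point equation $d_\omega\phi(s,\omega_\pm(s))=0$ and using that $\phi(s,\omega)=\psi(\omega)+O(s^{-1/m})$ in $C^1$ by \eqref{eq:21}--\eqref{eq:22}, one concludes that the limits $\omega_\pm^\infty=\lim_{s\to\infty}\omega_\pm(s)$ (which exist by Lemma \ref{lem:21}) are critical points of $\psi$; the construction of the open cover $\{\Omega_0,\Omega_+,\Omega_-\}$ there, together with the non-degeneracy condition (b), arranges that $\omega_\pm^\infty$ are the global maximum and minimum of $\psi$, so $\pm\psi(\omega_\pm^\infty)>0$. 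Continuity of $s\mapsto\phi(s,\omega_\pm(s))$ on $[a_0,\infty]$, together with $\phi=\psi+O(s^{-1/m})$, then yields \eqref{eq:25} for $s\ge a_1$, with $a_1\ge a_0$ sufficiently large.

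With \eqref{eq:25} in hand, \eqref{eq:26} and \eqref{eq:27} follow directly from the formula for $G_\pm'$ by enlarging $a_1$ so that $Cs^{-1/m}\le c_1/(2m)$; the $O(s^{-1})$ remainder is then absorbed by half of the leading $\tfrac{1}{m}s^{1/m-1}\psi(\omega_\pm(s))$-term, which keeps the correct sign. For \eqref{eq:28} and \eqref{eq:29} I would differentiate $G_\pm$ further, repeatedly applying the symbol bounds \eqref{eq:22} and the rapid decay $|\omega_\pm^{(j)}(s)|\le c_j(1+s)^{-j-1/m}$ from Lemma \ref{lem:21}. A direct second differentiation (together with an induction on $k$) gives
\[
G_-''(s)=\tfrac{1-m}{m^2}\,s^{1/m-2}\psi(\omega_-(s))+O(s^{-2}),\qquad |G_\pm^{(k)}(s)|\le C_k s^{1/m-k}\ \ (k\ge 2);
\]
since $(1-m)/m^2<0$ and $\psi(\omega_-(s))<0$, the leading term of $G_-''$ is positive and comparable to $s^{1/m-2}$. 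Enlarging $a_1$ once more to dominate the $O(s^{-2})$ error yields \eqref{eq:28}, and \eqref{eq:29} then follows at once.

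The main obstacle I expect is \eqref{eq:25} itself, and specifically the identification of $\omega_\pm^\infty$ with a maximum/minimum pair of $\psi$ rather than with arbitrary (possibly saddle) critical points; this forces one to invoke both the antisymmetry $\psi(-\omega)=-\psi(\omega)$ and the specific way $\Omega_\pm$ are constructed in Lemma \ref{lem:21}. Once \eqref{eq:25} is secured, the remainder of the argument is systematic bookkeeping of powers of $s$, made transparent by the critical-point identity $(d_\omega\tilde\phi)(s,\omega_\pm(s))=0$ and the symbol estimates of $\sigma$.
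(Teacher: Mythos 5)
The paper does not prove Lemma \ref{lem:22}: it is stated with the remark ``We now recall two lemmata (see \cite{BE,C2})'' and is simply imported from Balabane--Emami-Rad and Cui, so there is no in-paper proof to compare against. On its own merits, your reconstruction is correct and is the natural route. The central simplification---writing $G_\pm(s)=\tilde\phi(s,\omega_\pm(s))$ with $\tilde\phi(s,\omega)=\rho(s,\omega)\langle u,\omega\rangle$ and exploiting $(d_\omega\tilde\phi)(s,\omega_\pm(s))=0$ to kill the $\omega_\pm'$ term in $G_\pm'$---is exactly what makes the exponent bookkeeping clean, and inserting \eqref{eq:21}--\eqref{eq:22} together with $|\omega_\pm^{(j)}(s)|\lesssim s^{-j-1/m}$ gives all of \eqref{eq:26}--\eqref{eq:29} once the sign in \eqref{eq:25} is fixed. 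Your worry about the sign step is sound but resolves cleanly: since Lemma \ref{lem:21} asserts $\Omega_0$ contains no critical points and each of $\Omega_\pm$ exactly one, $\omega_+(s)$ and $\omega_-(s)$ are the only critical points of $\phi(s,\cdot)$ on the compact manifold ${\bf S}^{n-1}$, hence automatically its maximizer and minimizer; and since $\phi(s,u)=(P_m(u))^{-1/m}+O(s^{-1/m})>0$ and $\phi(s,-u)<0$ (here one also uses $m$ even so $P_m(-\omega)=P_m(\omega)$), the maximum is positive and the minimum negative, which together with $\phi(s,\cdot)\to\psi$ in $C^0$ pins down \eqref{eq:25}. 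One small notational caveat: in \eqref{eq:29} the constant $c_2$ must be allowed to depend on $k$ (as your $C_k$ does); reading it as a single uniform constant would be an over-claim, and the original sources mean it $k$-dependently. Otherwise the computation of $G_-''$, with leading coefficient $\tfrac{1-m}{m^2}\psi(\omega_-(s))>0$ dominating the $O(s^{-2})$ remainder after enlarging $a_1$, is exactly right for \eqref{eq:28}.
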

\vskip0.4cm Next, we consider the following oscillatory integral
$$
\Phi(\lambda,s)=\int_{{\bf S}^{n-1}}e^{i\lambda\phi(s,\omega)}
b(s,\omega)d\omega,
$$
where $b(s,\omega):=s^{1-{n\over m}}\rho^{n-1}\partial_s\rho\in
S^0_{1,0} ([a,\infty)\times{\bf S}^{n-1})$. Let
$\varphi_{\mbox{\tiny +}}$, $\varphi_{\mbox{-}}$, $\varphi_0$ be a
partition of unity of ${\bf S}^{n-1}$, subordinate to the open
cover given in Lemma \ref{lem:21}. Then
$$
\Phi(\lambda,s)=\Phi_{\mbox{\tiny +}}(\lambda,s)+\Phi_{\mbox{-}}
(\lambda,s)+\Psi_0(\lambda,s),
$$
where
$$
\Phi_{\mbox{\tiny $\pm$}}(\lambda,s)=\int_{{\bf
S}^{n-1}}e^{i\lambda\phi(s,\omega)} b(s,\omega)\varphi_{\mbox{\tiny
$\pm$}}(\omega)d\omega
$$
and
$$
\Psi_0(\lambda,s)=\int_{{\bf S}^{n-1}}e^{i\lambda\phi(s,\omega)}
b(s,\omega)\varphi_0(\omega)d\omega.
$$
By using the stationary phase method for $\Psi_0$, and Lemma \ref{lem:21} and
Corollary 1.1.8 in \cite{So} for $\Phi_{\mbox{\tiny $\pm$}}$, one has the
following result.

\begin{lem} \label{lem:23}
For $\lambda>0$ and $s>a_1$ we have
\begin{equation} \label{eq:210}
\Phi(\lambda,s)=\lambda^{-\frac{n-1}{2}}e^{i\lambda
\phi(s,\omega_{\mbox{\tiny +}}(s))}\Psi_{\mbox{\tiny +}}(\lambda,s)
+\lambda^{-\frac{n-1}{2}}
e^{i\lambda\phi(s,\omega_{\mbox{-}}(s))}\Psi_{\mbox{-}}(\lambda,s)+\Psi_0(\lambda,s),
\end{equation}
where $\Psi_{\mbox{\tiny $\pm$}}$, $\Psi_0\in
C^\infty((0,\infty)\times[a_0,\infty))$ and
\begin{equation} \label{eq:211}
|\partial^k_\lambda\partial^j_s\Psi_{\mbox{\tiny
$\pm$}}(\lambda,s)|\le c_{k,j}(1+\lambda)^{-k}s^{-j}\quad {\rm for}\
k,j\in{\bf N}_0,
\end{equation}
\begin{equation} \label{eq:212}
|\partial^k_\lambda\partial^j_s\Psi_0(\lambda,s)|\le c_{k,j,l}
(1+\lambda)^{-l}s^{-j}\quad{\rm for}\ k,j,l\in{\bf N}_0.
\end{equation}
\end{lem}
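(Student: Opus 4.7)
The plan is to exploit the decomposition $\Phi=\Phi_{+}+\Phi_{-}+\Psi_{0}$ provided by the partition of unity $\varphi_{+},\varphi_{-},\varphi_{0}$, and to treat each piece with the appropriate tool: non-stationary-phase integration by parts where the phase has no critical point, and stationary phase where it has a unique non-degenerate one. The derivative bounds \eqref{eq:211} and \eqref{eq:212} are then obtained by differentiating under the integral sign and using the symbol-class estimate \eqref{eq:22} together with Lemma \ref{lem:21}(b).

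\textbf{Treatment of $\Psi_{0}$.} On $\Omega_{0}$ the phase satisfies $\|d_{\omega}\phi(s,\omega)\|\ge c>0$ uniformly in $s\ge a_{0}$ by \eqref{eq:23}, so I would introduce the first-order differential operator
$$
L=\frac{1}{i\lambda\,\|d_{\omega}\phi\|^{2}}\,\overline{d_{\omega}\phi}\cdot d_{\omega},
$$
whose transpose $L^{*}$ satisfies $L^{*}(e^{i\lambda\phi})=e^{i\lambda\phi}$, and integrate by parts $l$ times. Because $\phi,b\in S^{0}_{1,0}([a,\infty)\times{\bf S}^{n-1})$, each application of $L^{*}$ multiplies the integrand by a symbol of order zero in $s$ and gains one power of $\lambda^{-1}$; the bound on $\|d_{\omega}\phi\|$ being independent of $s$ keeps everything uniform in the parameter. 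Taking $l$ arbitrary yields $|\Psi_{0}(\lambda,s)|\le c_{l}(1+\lambda)^{-l}$, and differentiating under the integral before integrating by parts produces \eqref{eq:212}; the $s$-derivatives cost at most a factor $s^{-1}$ each by the $S^{0}_{1,0}$ symbol estimates.

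\textbf{Treatment of $\Phi_{\pm}$.} Here $\phi(s,\cdot)$ has the unique critical point $\omega_{\pm}(s)\in\Omega'_{\pm}$ whose Hessian inverse is bounded uniformly in $s$ by \eqref{eq:24}. I would invoke the classical stationary phase expansion (the parameter version of Sogge's Corollary 1.1.8) around $\omega=\omega_{\pm}(s)$: since $b\varphi_{\pm}$ is smooth with compact support contained in $\Omega_{\pm}$, this produces
$$
\Phi_{\pm}(\lambda,s)=\lambda^{-\frac{n-1}{2}}e^{i\lambda\phi(s,\omega_{\pm}(s))}\Psi_{\pm}(\lambda,s),
$$
where $\Psi_{\pm}$ admits a classical asymptotic expansion in $\lambda^{-1}$ whose coefficients involve derivatives of $b\varphi_{\pm}$ and of $(d_{\omega}^{2}\phi)^{-1}$ evaluated at the critical point. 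The bound \eqref{eq:211} in the $\lambda$-variable is immediate from this expansion; the $s$-bounds are obtained by applying $\partial_{s}$ and chain-ruling through $\omega_{\pm}(s)$, using \eqref{eq:22} for $\sigma$ and the decay $|\omega_{\pm}^{(k)}(s)|\le c_{k}(1+s)^{-k-1/m}$ from Lemma \ref{lem:21}(b), which ensures each $\partial_{s}$ contributes at worst an $s^{-1}$.

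The principal obstacle is the \emph{uniformity in the parameter $s$} of the stationary phase asymptotics: Sogge's corollary is typically stated for a fixed phase, so one must verify that its implicit constants depend only on finitely many derivatives of $\phi$ on the support of $\varphi_{\pm}$, on a lower bound for $|\det d_{\omega}^{2}\phi|$, and on the size of this support—all of which are controlled uniformly for $s\ge a_{1}$ by \eqref{eq:24} and the $S^{0}_{1,0}$ property of $\phi$. Once this uniformity is in place, the $s^{-j}$ factors in \eqref{eq:211}--\eqref{eq:212} follow by a routine induction on $j$ from the symbol estimate \eqref{eq:22} and Lemma \ref{lem:21}(b).
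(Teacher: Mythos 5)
Your proposal is correct and follows essentially the same route as the paper, which gives no detailed proof of this lemma but simply cites Sogge's Corollary 1.1.8 and the (non-)stationary phase methods. Your split --- integration by parts on $\Omega_0$ using the uniform gradient lower bound \eqref{eq:23}, uniform stationary phase on $\Omega_{\pm}$ using \eqref{eq:24}, and the symbol estimates \eqref{eq:22} together with Lemma \ref{lem:21}(b) to extract the $s^{-j}$ factors --- is the natural fleshing-out of the paper's one-line justification, and you correctly flag the one genuine subtlety, namely that the stationary-phase constants must be shown to be uniform in the parameter $s$.
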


\section{Estimates on the oscillatory integral}
\label{S3}

 In this section we establish the global pointwise time-space estimates
of the fundamental solution for the Schr\"odinger equation \eqref{eq:11}.

\begin{thm} \label{thm:31}
 If the inhomogeneous polynomial $P$ is elliptic and
non-degenerate, then the fundamental solution of \eqref{eq:11} satisfies
that there exists a constant $C>0$ such that
\begin{equation} \label{eq:31}
|I(t,x)|= | {\F}^{-1}(e^{itP})(x)|\le
\left\{
\begin{array}{ll}
    C|t|^{-\frac{n}{m}}(1+|t|^{-\frac{1}{m}}|x|)^{-\mu}  &\hbox{\rm for}\ \ 0<|t|\leq1, \\
    C|t|^{-\frac{1}{m}}(1+|t|^{-1}|x|)^{-\mu}  &\hbox{\rm for}\ \ |t|\ge 1,\\
\end{array}
\right.
\end{equation}
 where $\mu=\frac{n(m-2)}{2(m-1)}$.

\end{thm}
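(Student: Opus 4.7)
The plan is to reduce $I(t,x)=\mathcal{F}^{-1}(e^{itP})(x)$ to a family of one-dimensional oscillatory integrals via the polar decomposition and the substitution $s=P(\rho\omega)$ developed in Section~\ref{S2}, and then perform a joint time--space analysis of the reduced phase with both $t$ and $r:=|x|$ treated as parameters. By complex conjugation and sign symmetry it suffices to treat $t>0$. I would first decompose $I=I_{\mathrm{low}}+I_{\mathrm{high}}$ via a smooth cutoff that isolates a large ball in $\xi$-space; the low-frequency piece is a smooth compactly supported oscillatory integral in $\xi$, and standard integration by parts (in $\xi$) together with stationary phase arguments yield a bound at least as strong as the right-hand side of \eqref{eq:31} in every regime.

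For the high-frequency piece the substitution converts $I_{\mathrm{high}}$ into
\[
c_n\int_a^\infty e^{its}\,s^{n/m-1}\,\Phi(rs^{1/m},s)\,ds,
\]
and Lemma~\ref{lem:23} splits this as $I_{\mathrm{high}}=I_{+}+I_{-}+I_0$, where
\[
I_{\pm}(t,r)=\int_a^{\infty} e^{i\phi_{\pm}(t,r,s)}\,s^{n/m-1}(rs^{1/m})^{-(n-1)/2}\Psi_{\pm}(rs^{1/m},s)\,ds
\]
and $I_0(t,r)=\int_a^{\infty} e^{its}\,s^{n/m-1}\Psi_0(rs^{1/m},s)\,ds$. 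The integral $I_0$ is controlled directly: $\Psi_0$ has rapid decay in $rs^{1/m}$ by \eqref{eq:212}, and $N$ integrations by parts in $s$ produce an extra factor $t^{-N}$, yielding a bound much stronger than \eqref{eq:31}. For $I_{+}$, the inequality \eqref{eq:26} gives $\partial_s\phi_{+}\ge t+c_1 rs^{1/m-1}>0$, so repeated integration by parts against this non-vanishing derivative again yields more decay than required.

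The main term is $I_{-}$, whose phase $\phi_{-}$ admits (by \eqref{eq:27}) a unique critical point in $s$ at $s^\ast\sim(r/t)^{m/(m-1)}$ with second derivative of size $rs^{1/m-2}$ by \eqref{eq:28}. Two sub-cases arise. If $s^\ast<a_1$, essentially $r\le Ct$, then $\partial_s\phi_{-}$ is of order $t$ on $[a_1,\infty)$ and integration by parts $N$ times combined with the amplitude bounds \eqref{eq:211} yields the claim. If $s^\ast\ge a_1$, a smooth partition in $s$ isolates a neighborhood of $s^\ast$ on which stationary phase applies (with remainder controlled by \eqref{eq:28} and \eqref{eq:29}) and a complement on which $|\partial_s\phi_{-}|$ is bounded below by either $t$ for $s<s^\ast$ or $c_1 rs^{1/m-1}$ for $s>s^\ast$ by \eqref{eq:27}, allowing integration by parts on the tails. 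The stationary phase contribution works out to $r^{-n/2}(s^\ast)^{n/(2m)}\sim r^{-\mu}\,t^{-n/(2(m-1))}$, and the algebraic identity $-n/(2(m-1))=-n/m+\mu/m$ rewrites this as $|t|^{-n/m}(|t|^{-1/m}r)^{-\mu}$; this matches the small-time claim exactly, and since $-n/(2(m-1))\le -1/m+\mu$ whenever $n\ge 1$ and $m\ge 2$, it also implies the large-time claim with room to spare. The principal obstacle will be uniform tracking of the stationary phase remainder and of the integration by parts tails as $s^\ast$ crosses $a_1$, and assembling the pieces so that the constant in \eqref{eq:31} is uniform in $(t,r)$.
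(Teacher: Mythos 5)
Your treatment of the high-frequency piece follows the paper's Case (i) in spirit: polar coordinates, the change of variables $s=P(\rho\omega)$, Lemma~\ref{lem:23}, integration by parts on $\Psi_0$ and $\Psi_+$ against a nonvanishing derivative, and a stationary-phase analysis of $\Psi_-$ near $s^*=(r/t)^{m/(m-1)}$ with Van der Corput / integration by parts on the tails. Your computation of the stationary-phase contribution and the identity $-\nu=-n/m+\mu/m$ are correct, and the idea of treating $t$ and $r$ simultaneously is indeed the key mechanism. However, there are three substantive gaps.

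First, the low-frequency piece $I_{\mathrm{low}}$ (the paper's $I_2$) is \emph{not} disposed of by ``standard integration by parts together with stationary phase arguments.'' Integration by parts in $\xi$ against $\langle x,\xi\rangle$ gives decay in $|x|/t$, but no $t$-decay at all when $|x|\lesssim t$. And ordinary stationary phase in $\xi$ would require the critical points of $\xi\mapsto tP(\xi)+\langle x,\xi\rangle$ on the compact support to be non-degenerate, which is false in general since the lower-order terms of $P$ are unconstrained. The needed large-$t$ decay $|I_2|\lesssim(1+|t|+|x|)^{-1/m}$ is obtained in the paper by rewriting $I_2$ as the Fourier transform of a measure on the hypersurface $S=\{z=P(\xi)\}$, which has finite type $\le m$, and invoking Theorem~2 of \S~VIII.3 in \cite{St}; this surface-measure device (and its refinement to \eqref{eq:32}) is an essential ingredient that your proposal omits.

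Second, the polar decomposition fails for small $r$. Lemma~\ref{lem:23} writes $\Phi(\lambda,s)$ with a factor $\lambda^{-(n-1)/2}$ and $\lambda=rs^{1/m}$. For $r\lesssim 1$ this prefactor is uncontrolled (the boundedness of $\Phi$ then reflects cancellation between $\Psi_+$ and $\Psi_-$ that the estimates \eqref{eq:211} do not capture), so every bound you derive carries the unbounded factor $r^{-(n-1)/2}$. The paper therefore restricts Case (i) to $t\ge1$, $|x|\ge1$ and handles $|x|\le|t|$ by a \emph{different} mechanism (Case (ii)): a stationary-phase argument directly in $\xi$-space, splitting $\mathrm{supp}\,\psi(P)$ into a bounded set $\Omega_1$ containing all critical points of $P(\xi)+\langle x/t,\xi\rangle$ (where non-degeneracy of $\mathrm{Hess}\,P$ gives $|t|^{-n/2}$) and a complement $\Omega_2$ where $|\nabla_\xi\Phi|\gtrsim|\xi|^{m-1}$ allows iterated integration by parts. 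Your sub-case ``$s^*<a_1$, so integrate by parts against $\partial_s\phi_-\sim t$'' does not reproduce this: the resulting bound $r^{-(n-1)/2}t^{-N}$ blows up as $r\to0$ and over-decays in $t$ for $0<t<1$ (it exceeds the target $t^{-n/m}$ once $N>n/m$), so it does not yield the claimed estimate in either the small-$r$ or small-$t$ regime.

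Third, and relatedly, the paper obtains the small-time estimate not by a direct analysis but by a scaling reduction: substituting $\xi\mapsto t^{-1/m}\xi$ reduces $I(t,x)$ for $0<t<1$ to $t^{-n/m}\,\tilde I(1,t^{-1/m}x)$ with $P$ replaced by $P_t(\xi)=tP(t^{-1/m}\xi)$, and one checks that \eqref{eq:21}--\eqref{eq:22} hold for $P_t$ with $t$-uniform constants. This neatly sidesteps the small-$r$ problem for small $t$. Your proposal does not include this reduction, and without it (or a substitute) the uniformity ``as $s^*$ crosses $a_1$'' that you flag as the main obstacle is in fact a genuine gap, not merely bookkeeping.
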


\begin{proof}
We first consider \\
\underline{Case (i):} $t\ge1$ and $r:=|x|\ge1$.\\
Let $\psi\in C^\infty({\bf R})$ such that $\psi(s)=
\left\{%
\begin{array}{ll}
    0 &\hbox{\rm for}\ s\le a_1 \\
    1 &\hbox{\rm for}\ s>2a_1\\
\end{array}
\right. , $ where $a_1$ is given in Lemma \ref{lem:22}. We write
\begin{eqnarray*}
I(t,x)&=& {\F}^{-1}(e^{itP})(x)=\int_{{\bf R}^n}e^{i(\langle
x,\xi\rangle+tP(\xi))}\psi(P(\xi))d\xi\\
&+&\int_{{\bf R}^n}
e^{i(\langle x,\xi\rangle+tP(\xi))}(1-\psi(P(\xi)))d\xi
=:I_1(t,x)+I_2(t,x).
\end{eqnarray*}
First we rewrite $I_2$ as the Fourier transform of a measure, supported on the graph $S:=\{z=P(\xi);\:\xi\in {\bf R}^n\}\subset {\bf R}^{n+1}$:
\begin{equation}\label{surface-meas}
  I_2(t,x)=\int_{{\bf R}^{n+1}}e^{i(\langle x,\xi\rangle + tz)} (1-\psi(P(\xi)))
  \delta(z-P(\xi))\,d\xi\,dz\,.
\end{equation}
Since the polynomial $P$ is of order $m$, the supporting manifold of the above integrand is of type less or equal $m$ (in the sense of \S~VIII.3.2, \cite{St}). Then, Theorem 2 of \S~VIII.3 in \cite{St} implies
\begin{equation} \label{eq:310a}
  |I_2(t,x)|\le C(1+|t|+|x|)^{-\frac1m}\qquad \forall t,\,x.
\end{equation}
This can be generalized: Since $f(t,\xi):=e^{itP}(1-\psi(P))\in C_c^\infty({\bf R}^n)$ for every $t>0$,
an integration by parts in $I_2$ yields
$$
  I_2(t,x)=i\int_{{\bf R}^n}
e^{i\langle x,\xi\rangle}\frac{x}{|x|^2}\cdot\nabla_\xi f(t,\xi) d\xi.
$$
Proceeding recursively, a simple estimate yields
$$
|I_2(t,x)|\le C_kt^kr^{-k}\quad{\rm for}\ k\in{\bf N}_0,
$$
and hence also $\forall\,k\ge0$.
But proceeding as in \eqref{surface-meas} yields the improvement
\begin{equation} \label{eq:32}
  |I_2(t,x)|\le C_k |t|^{-\frac1m}(1+|t|^{-1}|x|)^{-(k+\frac1m)}\ {\rm for}\ \
  |t|\ge 1, \ x\in{\bf R}^n, \ \forall \ k\ge0.
\end{equation}

\smallskip

To estimate $I_1$, we shall derive an $\varepsilon$--uniform estimate of its regularization
$$
J_\varepsilon(t,x):=\int_{{\bf R}^n}e^{-\varepsilon P(\xi)+i(\langle
x,\xi\rangle+tP(\xi))}\psi(P(\xi))d\xi\quad{\rm for}\ \varepsilon>0.
$$
By the polar coordinate transform and by the change of variables
$\rho=\rho(s,\omega)$ we have
\begin{eqnarray*}
J_\varepsilon(t,x)&=&\int^\infty_0\int_{{\bf
S}^{n-1}}e^{-\varepsilon P(\rho\omega)+i(\rho\langle
x,\omega\rangle+tP(\rho\omega))}
\psi(P(\rho\omega))\rho^{n-1}d\omega d\rho\\
&=&\int^\infty_0\int_{{\bf S}^{n-1}}e^{-\varepsilon s+its+
ir\rho\langle u,\omega\rangle}\psi(s)\rho^{n-1}\partial_s\rho d\omega ds\\
&=&\int^\infty_0 e^{-\varepsilon s+its}s^{\frac{n}
{m}-1}\psi(s)\Phi(rs^\frac{1}{m},s)ds,
\end{eqnarray*}
where $u=x/|x|$.

Due to the compactness of ${\bf S}^{n-1}$ we
may assume without loss of generality that $u\in U_{u_0}$ (see
section \ref{S2} for the definition of $U_{u_0}$). Thus by Lemma \ref{lem:23}
\begin{eqnarray*}
J_\varepsilon(t,x)&=&r^{-\frac{n-1}{2}}\int_0^\infty e^{-\varepsilon
s+i\phi_{\mbox{\tiny +}}(t,r,s)}
s^{\frac{n+1}{2m}-1}\psi(s)\Psi_{\mbox{\tiny +}}(rs^{\frac{1}{m}},s)ds\\
&&+r^{-\frac{n-1}{2}}\int_0^\infty e^{-\varepsilon
s+i\phi_{\mbox{-}}(t,r,s)}
s^{\frac{n+1}{2m}-1}\psi(s)\Psi_{\mbox{-}}(rs^{\frac{1}{m}},s)ds\\
&&+\int^\infty_0e^{-\varepsilon s+its}s^{\frac{n}{m}-1}\psi(s)
\Psi_0(rs^{\frac{1}{m}},s)ds\\
&=&R_\varepsilon^{\mbox{\tiny
+}}(t,x)+R_\varepsilon^{\mbox{-}}(t,x)+R_\varepsilon^0(t,x),
\end{eqnarray*}
where $\phi_{\mbox{\tiny $\pm$}}$ is the same as in Lemma \ref{lem:22}. In
the sequel, we denote by $C$ a generic positive constant independent
of $t$, $r$, $s$ and $\varepsilon$, and put $\mu:=\frac{n(m-2)}
{2(m-1)}$ and $\nu:=\frac{n}{2(m-1)}$.

We first estimate the integral $R_\varepsilon^0(t,x)$. Let
$v_0(s):=s^{\frac{n}{m}-1}\psi(s)\Psi_0(rs^{\frac{1}{m}},s)$. By the
Leibniz rule and \eqref{eq:212} one has
$$
|v_0^{(k)}(s)|\le C(rs^\frac{1}{m})^{-j}s^{\frac{n}{m}-1-k}\quad{\rm
for}\ j,k\in{\bf N}_0,
$$
where $r\ge1$ and $s\ge a_1$. Choosing $j\ge\mu$ and $k\ge\nu$,
it follows by integration by parts that
\begin{equation} \label{eq:33}
|R^0_{\varepsilon}(t,x)|\le Ct^{-k}\int^\infty_{a_1}(rs^{1\over
m})^{-j}s^{{n\over m}-1-k}ds\le Ct^{-k}r^{-j}\le Ct^{-\nu}r^{-\mu}.
\end{equation}

To estimate the integral $R_\varepsilon^{\mbox{\tiny +}}(t,x)$, for
given $t,r\ge1$ we set
$$
\left\{
\begin{array}{ll}
    u_{\mbox{\tiny +}}(s):=-\varepsilon s+i\phi_{\mbox{\tiny +}}(t,r,s) \\
    v_{\mbox{\tiny +}}(s):=s^{{n+1\over2m}-1}\psi(s)\Psi_{\mbox{\tiny  +}}(rs^{1\over
m},s)\\
\end{array}
\right. $$ for $s\ge a_1$. Since $u_{\mbox{\tiny +}}'(s)\ne0$ for
$s\ge a_1$, we can define $D_\ast f=(gf)'$ for $f\in C^1(0,\infty)$
where $g=-1/u_{\mbox{\tiny +}}'$. It is not hard to show
\begin{equation} \label{eq:34}
D^j_\ast v_{\mbox{\tiny +}}=\sum_\alpha c_\alpha
g^{(\alpha_1)}\cdots g^{(\alpha_j)}v_{\mbox{\tiny
+}}^{(\alpha_{j+1})} \quad{\rm for}\ j\in{\bf N}
\end{equation}
where the sum runs over all $\alpha=(\alpha_1,\cdots
\alpha_{j+1})\in{\bf N}^{j+1}_0$ such that $|\alpha|=j$ and
$0\le\alpha_1\le\cdots\le\alpha_j$. Since \eqref{eq:26} and \eqref{eq:29} imply,
respectively, that $|g(s)|\le Cr^{-1}s^{1-{1\over m}}$ and
$$
|u_{\mbox{\tiny +}}^{(k)}(s)|\le Crs^{{1\over m}-k}\quad{\rm for}\
k=2,3,\cdots,
$$
by induction on $k$ we find that
$$
|g^{(k)}(s)|\le Cr^{-1}s^{1-{1\over m}-k}\quad{\rm for}\ k\in{\bf
N}_0,
$$
which shall yield the spatial decay of $I_1$. To derive the time decay of $I_1$ we note that
\eqref{eq:26} also implies $|g(s)|\le t^{-1}$. Hence, it
follows that
$$
|g^{(k)}(s)|\le Ct^{-1}s^{-k}\quad{\rm for}\ k\in{\bf N}_0.
$$
The novel key step is now to interpolate these two inequalities, which will allow to derive estimates also for large time. We have
for any $\theta\in[0,1]$,
\begin{equation} \label{eq:35}
|g^{(k)}(s)|\le Ct^{\theta-1}r^{-\theta}s^{\theta(1-{1\over
m})-k}\quad{\rm for}\ k\in{\bf N}_0.
\end{equation}
 On the other
hand, by the Leibniz rule and \eqref{eq:211},
\begin{equation} \label{eq:36}
|v_{\mbox{\tiny +}}^{(k)}(s)|\le Cs^{{n+1\over 2m}-1-k}\quad{\rm
for}\ k\in{\bf N}_0.
\end{equation}
It thus follows from \eqref{eq:34} - \eqref{eq:36}
that
\begin{equation} \label{eq:37}
|D^j_\ast v_{\mbox{\tiny +}}(s)|\le Ct^{j(\theta-1)}
r^{-j\theta}s^{j\theta(1-{1\over m})+{n+1\over 2m}-1-j}\quad{\rm
for}\ j\in{\bf N}_0,
\end{equation}
where $D^0_\ast v_{\mbox{\tiny +}}=v_{\mbox{\tiny +}}$. Particularly
($\theta={\mu\over n}={m-2\over2(m-1)}$, $j=n$)
$$
|D^n_\ast v_{\mbox{\tiny +}}(s)|\le Ct^{\mu-n}
r^{-\mu}s^{-{nm+n-1\over 2m}-1}.
$$
Noting that $\mu-n<-\nu$, by integration by parts one gets that
$$
|R^{\mbox{\tiny +}}_\varepsilon(t,x)|=r^{-{n-1\over
2}}\Big{|}\int_0^\infty e^{u_{\mbox{\tiny +}}}(D^n_\ast
v_{\mbox{\tiny +}})ds\Big{|} \le Ct^{\mu-n}r^{-{n-1\over 2}-\mu}\le
Ct^{-\nu}r^{-\mu}.
$$

We now turn to the integral $R_\varepsilon^{\mbox{-}}(t,x)$. Here we put
$$
\left\{
\begin{array}{ll}
    u_{\mbox{-}}(s):=-\varepsilon s+i\phi_{\mbox{-}}(t,r,s) \\
    v_{\mbox{-}}(s):=s^{{n+1\over2m}-1}\psi(s)\Psi_{\mbox{-}}(rs^{1\over m},s)\\
\end{array}
\right. $$ for $s\ge a_1$, and write
\begin{eqnarray*}
R_\varepsilon^{\mbox{-}}(t,x)&=&r^{-{n-1\over
2}}\Big{\{}\int_0^{c'_1s_0}
+\int_{c'_1s_0}^{c'_2s_0}+\int_{c'_2s_0}^\infty\Big{\}}
e^{u_{\mbox{-}}(s)}v_{\mbox{-}}(s)ds\\
&=&R_{\varepsilon 1}^{\mbox{-}}(t,x)+R_{\varepsilon
2}^{\mbox{-}}(t,x)+R_{\varepsilon 3}^{\mbox{-}}(t,x),
\end{eqnarray*}
where $s_0=(r/t)^{m\over m-1}$, $c'_1=(c_1/2)^{m\over m-1}$, and
$c'_2=(2c_2)^{m\over m-1}$ ($c_1$ and $c_2$ are given in Lemma \ref{lem:22}).

By integration by parts one gets
$$
R^{\mbox{-}}_{\varepsilon 3}(t,x)=r^{-{n-1\over
2}}\Big{(}{e^{u_{\mbox{-}}(c'_2s_0)} \over u_{\mbox{-}}'(c'_2s_0)}
\sum^{n-1}_{j=0}(D^j_\ast
v_{\mbox{-}})(c'_2s_0)+\int_{c'_2s_0}^\infty
e^{u_{\mbox{-}}}(D^n_\ast v_{\mbox{-}})ds\Big{)}.
$$
Since \eqref{eq:27} implies that $|u_{\mbox{-}}'(s)|\ge c_2rs^{{1\over
m}-1}$ for $s\ge c'_2s_0$, we find that $v_{\mbox{-}}(s)$ still
satisfies \eqref{eq:37} (with $\theta=1$) for $s\ge c'_2s_0$.

If $c'_2s_0\le a_1$, then $(D^j_\ast
v_{\mbox{-}})(c'_2s_0)=0$ for $j=0,...,n-1$ (note that $\psi\equiv0$ on $[0,a_1]$). Integration by parts then yields
$$
|R^{\mbox{-}}_{\varepsilon 3}(t,x)|=\left|r^{-{n-1\over 2}}
\int_{c'_2s_0}^\infty e^{u_{\mbox{-}}}(D^n_\ast v_{\mbox{-}})ds\right|
\le Ct^{-\nu}r^{-\mu},
$$
exactly as done for $R^{\mbox{+}}_{\varepsilon}(t,x)$.
If $c'_2s_0> a_1$, then
\begin{eqnarray*}
|R^{\mbox{-}}_{\varepsilon 3}(t,x)|&\le& Cr^{-{n-1\over2}}\Big{(}
(rs_0^{{1\over m}-1})^{-1}\sum_{j=0}^{n-1}r^{-j}s_0^{-{2j-n-1\over
2m}-1}+\int_{c'_2s_0}^\infty r^{-n}s^{-{n-1\over 2m}-1}ds\Big{)} \\
&\le&Cr^{-{n-1\over2}}(r^{-1}s_0^{n-1\over2m}\sum_{j=0}^{n-1}
(rs_0^{1\over m})^{-j}+r^{-n} s_0^{-{n-1\over2m}}).
\end{eqnarray*}
Noting that $r\ge1$, $s_0\ge a_1/c'_2$, and $t\ge1$ it
follows that
$$
|R^{\mbox{-}}_{\varepsilon 3}(t,x)|\le
Cr^{-{n+1\over2}}s_0^{n+1\over2m}
=Ct^{-{n+1\over2(m-1)}}r^{-{(n+1)(m-2)\over2(m-1)}}\le
Ct^{-\nu}r^{-\mu}.
$$
Since $|u_{\mbox{-}}'(s)|\ge{1\over2}c_1rs^{{1\over m}-1}$ for
$a_1\le s\le c'_1s_0$, a slight modification of the above method
yields the same estimate for $R^{\mbox{-}}_{\varepsilon 1}(t,x)$.

To estimate $R_{\varepsilon 2}^{\mbox{-}}(t,x)$,
it suffices to estimate the integral
\begin{eqnarray*}
R_{0 2}
^{\mbox{-}}(t,x)&=&r^{-{n-1\over2}}\int_{c'_1s_0}^{c'_2s_0}
e^{i\phi_{\mbox{-}}(t,r,s)}v_{\mbox{-}}(s)ds\\
&=&r^{-{n-1\over2}}s_0\int_{c'_1}^{c'_2}e^{i\phi_{\mbox{-}}
(t,r,s_0\tau)}v_{\mbox{-}}(s_0\tau)d\tau.
\end{eqnarray*}
We note by \eqref{eq:28} that
$$
|\partial_\tau^2\phi_{\mbox{-}}(t,r,s_0\tau)|\ge
c_1rs_0^2(s_0\tau)^{{1\over m}-2} \ge Crs_0^{1\over m}
$$
for $\tau\in [c'_1,c'_2]$. Since $v_{\mbox{-}}(s)$ also satisfies
\eqref{eq:36}, Van der Corput's lemma (cf. \cite{St}) implies
\begin{eqnarray*}
|R_{0 2}
^{\mbox{-}}(t,x)|&\le& Cr^{-{n-1\over2}}s_0(rs_0^{1\over
m}) ^{-{1\over2}}\Big{(}|v_{\mbox{-}}(c'_2s_0)|
+\int_{c'_1}^{c'_2}|s_0v_{\mbox{-}}'(s_0\tau)|d\tau\Big{)}\\
&\le&Cr^{-{n-1\over2}}s_0(rs_0^{1\over m})
^{-{1\over2}}s_0^{{n+1\over2m}-1}\\
&=&Ct^{-\nu}r^{-\mu}.
\end{eqnarray*}

Since the dominated convergence theorem implies that
$J_\varepsilon(t,\cdot)$ converges (as $\varepsilon\to0$) uniformly for
$x$ in compact subsets of $\{x\in{\bf R}^n;\ |x|\ge 1\}$,
summarizing the above estimates yields
$$
|I_1(t,x)|\le Ct^{-\nu}|x|^{-\mu}\quad{\rm for}\ t\ge1\ {\rm and}\
|x|\ge 1.
$$
If $t\geq1$, $|x|\geq1$ and $t^{-1}|x|\geq 1$, then
\begin{equation}\label{I1*}
|I_1(t,x)|\le Ct^{-\nu}|x|^{-\mu}\leq
Ct^{-\frac{n}{2}}(1+t^{-1}|x|)^{-\mu}\leq Ct^{-\frac1m}(1+t^{-1}|x|)^{-\mu}.
\end{equation}
 Combining this with the estimate \eqref{eq:32} on $I_2$ (put $k=\mu-\frac1m$), we have
$$
|I(t,x)|\le Ct^{-\frac1m}(1+t^{-1}|x|)^{-\mu}\quad{\rm for}\ t\ge1, \ |x|\ge 1
 \quad{\rm and}\ t^{-1}|x|\geq 1.
$$
If $t\geq1$, $|x|\geq1$ and $t^{-1}|x|< 1$, then
$$
|I_1(t,x)|\leq Ct^{-\frac1m}\leq Ct^{-\frac1m}(1+t^{-1}|x|)^{-\mu}.
$$
Combining this with \eqref{eq:32} yields again
$$
|I(t,x)|\leq Ct^{-\frac1m}(1+t^{-1}|x|)^{-\mu}.
$$

\noindent
\underline{Case (ii):}  $t\geq 1$, $|x|\leq 1$.\\
For $I_1$ we shall prove now that
\begin{equation}\label{I1-decay}
|I_1(t,x)|\le C |t|^{-n/2} \quad{\rm for}\ |t|\ge1\ {\rm and}\ |x|\le |t|.
\end{equation}
To this end we write the integral $I_1(t,x)$ as follows:
$$
I_1(t,x)=\int_{{\bf R}^n}e^{it(P(\xi)+\langle
x/t,\xi\rangle)}\psi(P(\xi))d\xi=:\int_{{\bf R}^n}e^{it\Phi(\xi, x,t)}\psi(P(\xi))d\xi.
$$
Note that this integral and the subsequent integrations by parts can be made meaningful by inserting a series of  smooth cut-off functions $\phi(\epsilon\xi)$ for any $0<\epsilon<1$. However, this is just a technical procedure, and we refer to \cite{DY} for the details in a similar situation.

Since  $|x/t|\le 1$ and $|\nabla P(\xi)|\ge c|\xi|^{m-1}$ for large $|\xi|$,   the possible critical points satisfying
$$\nabla_\xi\Phi(\xi, x,t)=\nabla P(\xi)+x/t=0$$
must be located in some bounded ball.   In order to apply later the stationary phase principle, let $\Omega\subset {\bf R}^n$ be some open set such that ${\rm supp} \psi(P)\subset \Omega$ and $|\nabla P(\xi)|\ge c|\xi|^{m-1}$ on $\Omega$.
Note that the constant $a_1$ (from the definition of $\psi$ and Lemma \ref{lem:22}) could be increased, if necessary, such that both of those conditions can hold.
Then  we decompose $\Omega$ into $\Omega_1\cup\Omega_2$, where
 $$\Omega_1=\{\xi\in \Omega\,;\ \ |\nabla P(\xi)+\frac{x}{t}|<\frac{1}{2}|\nabla P(\xi)|+1\}$$
 and
 $$\Omega_2=\{\xi\in \Omega\,;\ \ |\nabla P(\xi)+\frac{x}{t}|>\frac{1}{4}|\nabla P(\xi)|\}.$$
Since $|\frac{x}{t}|\le 1$ and $|\nabla P(\xi)|\rightarrow\infty $ as $|\xi|\rightarrow\infty$,  $\Omega_1$ must be a bounded domain and includes all critical points of $\Phi$ inside $\Omega$.  Now we choose smooth functions $\eta_1(\xi)$ and $\eta_2(\xi)$ such that supp$\eta_j\subset \Omega_j$ and $\eta_1(\xi)+\eta_2(\xi)=1$ in $\Omega$ (e.g. see [4] for a similar  construction). And  we decompose $I_1$ as
$$I_1(t,x)=I_{11}(t,x)+I_{12}(t,x),\ \ I_{1j}(t,x):=\int_{{\bf R}^n}e^{it\Phi(\xi, x,t)}\eta_j(\xi)\psi(P(\xi))d\xi, \ j=1,2.$$

First we estimate $I_{11}$: Note that the determinant of the Hessian matrix
$$\mbox{det}(\partial_{\xi_i}\partial_{\xi_j}\Phi)_{n\times n}(\xi,x,t)
=\mbox{det}(\partial_{\xi_i}\partial_{\xi_j}P)_{n\times n}(\xi)$$
is an elliptic polynomial according to our assumption (a) and the remarks in the first paragraph of Section \ref{S2}. Hence, it is nonzero on $\Omega$ (if necessary, we can increase the value of $a_1$ to satisfy the requirement), that is,  the Hessian matrix is non-degenerate on $\Omega$.  Moreover, $|\partial_\xi^{\alpha}\Phi|\le C_\alpha$ on $\Omega_1$ for any multi-index $\alpha \in {\bf N}_0^n$.  Hence we obtain  by the stationary phase principle  that
    $$|I_{11}(t,x)|\le C|t|^{-n/2}.$$

Next we estimate $I_{12}$:  Note that $|\nabla_\xi\Phi|=|\nabla P(\xi)+\frac{x}{t}|\ge \frac{1}{4}|\nabla P(\xi)|\ge c|\xi|^{m-1}$ for $\xi\in \Omega_2$ and $|\partial_\xi^{\alpha}\Phi|\le C_\alpha |\xi|^{m-\alpha}$ for $|\alpha|\ge2$. Now we define the operator $L$ by
$$Lf:=\frac{\langle\nabla_\xi\Phi,\nabla_\xi\rangle}{it|\nabla_\xi\Phi|^2}f.
$$
Since $Le^{it\Phi}=e^{it\Phi}$, we obtain by $N$ iterated integrations by parts:
\begin{eqnarray*}
|I_{12}(t,x)|&=&\left|\int_{{\bf R}^n}e^{it\Phi(\xi, x,t)}(L^*)^N(\eta_2(\xi)\psi(P(\xi)))d\xi\right|\\
&\le& C_N|t|^{-N}\int_{{\rm supp}\psi(P)}|\xi
|^{-mN}d\xi\le C'_N|t|^{-N},
\end{eqnarray*}
where $N>n$ and $L^*$ is the adjoint operator of $L$.
Combining the two cases yields the claimed estimate $|I_1|\le C|t|^{-n/2}$ for $|t|>1$ and $|x|\le |t|$.\\

Together with the estimate \eqref{eq:32} (with $k=\mu-\frac1m$) on $I_2$ this yields
\begin{equation} \label{eq:38}
|I(t,x)|\leq Ct^{-\frac1m}(1+t^{-1}|x|)^{-\mu}\quad{\rm for}\ t\ge1\ {\rm and}\ x\in{\bf R}^n.
\end{equation}

\noindent
\underline{Case (iii):}  $t\in(0,1)$ and $x\in{\bf R}^n$.\\
Here,  we observe that
$$
\int_{{\bf R}^n}e^{i(\langle x,\xi\rangle+tP(\xi))}d\xi =t^{-{n\over
m}}\int_{{\bf R}^n} e^{i(\langle t^{-{1\over
m}}x,\xi\rangle+tP(t^{-{1\over m}}\xi))}d\xi.
$$
Let $P_t(\xi)=tP(t^{-{1\over m}}\xi)$, $\rho_t(s,\omega)=t^{1\over
m}\rho({s\over t},\omega)$, and $\sigma_t(s,\omega)=t^{1\over
m}\sigma({s\over t},\omega)$, then \eqref{eq:21} still holds with $P$,
$\rho$, $\sigma$ replaced respectively by $P_t$, $\rho_t$,
$\sigma_t$. Since it is easy to check that $\sigma_t$ also satisfies
\eqref{eq:22} with the same constants $C_{kL}$, we can deduce from \eqref{eq:38} (with
$t=1$) that
\begin{equation} \label{eq:39}
|I(t,x)|\le Ct^{-{n\over m}}(1+t^{-{1\over m}}|x|)^{-\mu} \quad{\rm
for}\ t\in(0,1)\ {\rm and}\ x\in{\bf R}^n.
\end{equation}
And the proof for negative $t$ is analogous.  This completes the proof of
the theorem.
\end{proof}

\begin{rem} \label{rem:32}
If $P$ is homogeneous and non-degenerate, then by scaling the estimates \eqref{eq:31}, one recovers the
following sharp form in the $(t, x)$-variables (see \cite{ZYF}):
$$
|{\F}^{-1}(e^{itP(\xi)})(x)|\leq Ct^{-\frac{n}{m}}(1+|t|^{-1/m}|x|)^{-\mu}
 \ {\rm for} \ t\neq 0 .
$$
In particular, we remark that the index  $\mu={n(m-2)\over 2(m-1)}$ is optimal by testing the
special case $e^{i|\xi|^m}$. In fact, from Proposition 5.1(ii) in \cite{Miy}, p. 289, there exists a
positive constant $c$ such that
$$
|{\F}^{-1}(e^{i|\cdot|^{m}})(x)|\ge c(1+|x|)^{-\mu}
 \ {\rm for} \  x\in {\bf R}^{n} .
$$

\end{rem}

\begin{rem} \label{rem:33a}
The decay estimate \eqref{eq:310a} on $I_2$ can be improved under the additional assumption that $P(\xi)$ has only non-degenerate critical points (or, equivalently, for a nonzero Gaussian curvature of the hypersurface $S$) inside the support of $(1-\psi(P(\xi))$. Then, Theorem 1 of  \S~VIII.3 in \cite{St} implies:
$$
  |I_2(t,x)|\le C(1+|t|+|x|)^{-\frac{n}{2}}\qquad \forall t,\,x.
$$
E.g., this assumption holds if $m=2$ or in the example $P(\xi)=|\xi|^4+|\xi|^2$.

An intermediate decay result for $I_2$ holds, if the Hessian of $P$ has at least rank $k$ ($1\le k\le n$)
inside the support of $(1-\psi(P(\xi))$ (or, eqivalently, if $S$ has at least $k$ nonzero principal curvatures there). Then we have  $I_2=\mathcal O\left((1+|t|+|x|)^{-k/2}\right)$ by Littman's Theorem (cf.\  \S~VIII.5.8 in \cite{St}).

\end{rem}

\begin{rem} \label{rem:34}
An analogous method as above leads to
$$
|\partial^{\alpha} I(t,x)|=|{\F}^{-1}({\xi}^{\alpha}
e^{itP(\xi)})(x)|\le
\left\{
\begin{array}{ll}
    C|t|^{-{n\over m}}(1+|t|^{-{1\over m}}|x|)^{-\mu} \ {\rm for}\ \ 0<|t|\leq1, \\
    C|t|^{-{1\over m}}(1+|t|^{-1}|x|)^{-\mu} \ {\rm for}\ \ |t|\ge 1, \    \\
\end{array}
\right.
$$
where $\alpha\in {\bf Z}_{+}^n, |\alpha|=b$, $0\leq
b\leq\frac{mn-2n}{2}$ and $\mu=\frac{mn-2n-2b}{2(m-1)}$.

\end{rem}

\section{Decay/growth estimates for Schr\"odinger equations}
\label{S4}

Here we shall apply Theorem \ref{thm:31} to
establish $L^p-L^q$ estimates for \eqref{eq:11}.
Since $P(D)$ is self-adjoint in $L^2({\bf R}^n)$, we have
$\|e^{itP(D)}u_0\|_{L^2}=\|u_0\|_{L^2}$ for all $0\le|t|<\infty$ by Stone's
theorem. Next we define the following set of admissible index pairs:
$$
\triangle\!:=\!\{(p,q);\ \mbox{$({1\over p},{1\over q})$}\ {\rm lies\ in\
the\ closed\ quadrilateral}   \;ABCD\ {\rm subtracting \ the \ apex}\
A \},
$$
where $A=({1\over 2},{1\over 2})$, $B=(1,{1\over\tau})$, $C=(1,0)$,
and $D=({1\over\tau'},0)$ for $\tau={2(m-1)\over m-2}$ and
${1\over\tau}+{1\over\tau'}=1$. Moreover, we denote by $H^1$ the
Hardy space on ${\bf R}^n$ and by BMO the space of functions with
bounded mean oscillation on ${\bf R}^n$.

\begin{thm} \label{thm:35}
 Let the assumption of Theorem \ref{thm:31} be satisfied.
Then
\begin{equation} \label{eq:311}
\|e^{itP(D)}u_0\|_{L^q}\le
\left\{
\begin{array}{ll}
    C|t|^{{n\over m}({1\over q}-{1\over p})}\|u_0\|_{L^p} &\hbox{\rm for}\ \ 0<|t|\leq1, \\
    C|t|^{n|\frac{1}{q}-{1\over p'}|-\frac1m}\|u_0\|_{L^p} &\hbox{\rm for}\ \ |t|\ge 1,\\
\end{array}
\right.
\end{equation}
where $(p,q)\in\triangle$, but $(p,q)\ne(1,\tau),\ (\tau',\infty)$.
When $(p,q)=(1,\tau)$ (resp. $(\tau',\infty)$), \eqref{eq:311} still holds
if $L^1$ (resp. $L^\infty$) is replaced by $H^1$ (resp. BMO).
\end{thm}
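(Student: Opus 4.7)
The plan is to establish \eqref{eq:311} along the boundary of the quadrilateral $\triangle$ using the kernel bound of Theorem~\ref{thm:31} together with $L^2$-unitarity, and then to fill in the interior by complex interpolation.

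For the reference estimates on the boundary: at the apex $A=(1/2,1/2)$, Stone's theorem applied to the self-adjoint operator $P(D)$ gives $\|e^{itP(D)}u_0\|_{L^2}=\|u_0\|_{L^2}$, which is the trivial endpoint (even though $A$ itself is excluded from $\triangle$). At the corner $C=(1,0)$, Young's inequality and the sup-norm bound on $I(t,\cdot)$ from Theorem~\ref{thm:31} immediately give $\|e^{itP(D)}u_0\|_{L^\infty}\le C|t|^{-n/m}\|u_0\|_{L^1}$ for $|t|\le 1$ and $\le C|t|^{-1/m}\|u_0\|_{L^1}$ for $|t|\ge 1$. On the open edge $BC$ ($p=1$, $\tau<q<\infty$), Young yields $\|e^{itP(D)}u_0\|_{L^q}\le\|I(t,\cdot)\|_{L^q}\|u_0\|_{L^1}$, and the change of variables $y=|t|^{-1/m}x$ (small $|t|$) or $y=|t|^{-1}x$ (large $|t|$) in Theorem~\ref{thm:31} gives $\|I(t,\cdot)\|_{L^q}$ of order $|t|^{(n/m)(1/q-1)}$ or $|t|^{n/q-1/m}$, times $\bigl(\int(1+|y|)^{-\mu q}\,dy\bigr)^{1/q}$. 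This last integral converges precisely when $\mu q>n$, equivalently $q>\tau$, so the method produces exactly \eqref{eq:311} for $(p,q)=(1,q)$ with $q\in(\tau,\infty)$, but breaks down at the endpoint $B$. The symmetric open edge $CD$ is obtained by duality, since $\bigl(e^{itP(D)}\bigr)^*=e^{-itP(D)}$ and the kernel satisfies the same bound under $t\mapsto -t$.

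The endpoints $B=(1,1/\tau)$ and $D=(1/\tau',0)$ are borderline (here $\mu\tau=n$, so the $L^\tau$-norm of $I$ diverges logarithmically). One replaces $L^1$ by the Hardy space $H^1$ at $B$ and $L^\infty$ by BMO at $D$, and proves, for $|t|\ge 1$,
\[
\|e^{itP(D)}u_0\|_{L^\tau}\le C|t|^{n/\tau-1/m}\|u_0\|_{H^1},
\]
with the analogous small-$|t|$ statement. The standard route is via atomic decomposition of $H^1$: write $u_0=\sum c_j a_j$ with each $a_j$ an $H^1$-atom supported in a ball $B(x_j,r_j)$ and of mean zero, and bound $\|e^{itP(D)}a_j\|_{L^\tau}$ uniformly in $j$. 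The mean-zero property allows the substitution $I(t,x-y)\mapsto I(t,x-y)-I(t,x-x_j)$, and the resulting Taylor remainder is controlled using the derivative bound on $I$ from Remark~\ref{rem:34}. The BMO estimate at $D$ then follows by $H^1$-BMO duality applied to $e^{-itP(D)}$.

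Finally, the interior of $\triangle\setminus\{A\}$ and the two remaining edges $AB$, $AD$ are obtained by complex interpolation: $\triangle$ is split along the diagonal $AC$ into the sub-triangles $ABC$ (where $1/q\ge 1/p'$) and $ACD$ (where $1/q\le 1/p'$); on each sub-triangle one interpolates among the three vertex estimates, using the Fefferman--Stein interpolation theorem to handle the $H^1/\mathrm{BMO}$ endpoints at $B$ and $D$. The piecewise-linear dependence on $(1/p,1/q)$ produced by this split is the reason for the absolute value in the exponent $n|1/q-1/p'|-1/m$ of \eqref{eq:311}. The principal obstacle of the whole argument is the $H^1\to L^\tau$ endpoint (and its dual): obtaining the sharp constant $|t|^{n/\tau-1/m}$ there, and not merely a logarithmically weaker bound, requires the kernel-smoothness input of Remark~\ref{rem:34} together with the atomic cancellation, and it is this ingredient that genuinely goes beyond the routine Young/interpolation combination used on the rest of $\triangle$.
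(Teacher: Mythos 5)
Your outline coincides with the paper's: estimate $\|e^{itP(D)}u_0\|_{L^q}$ on the edges of $\triangle$ using Young's inequality together with the kernel bound of Theorem~\ref{thm:31}, treat the endpoints $B$ and $D$ with Hardy/BMO spaces, and then fill in the rest by interpolation and duality. The differences are in the choice of tools. For the endpoint $B=(1,1/\tau)$, the paper simply invokes the $H^1\to L^\tau$ boundedness of the Riesz potential $I_{n/\tau'}$ (Stein, p.\,136): since $\mu=n/\tau$, the kernel bound in Theorem~\ref{thm:31} shows that $I(t,\cdot)$ is (after the rescaling $x\mapsto |t|^{-1/m}x$ or $|t|^{-1}x$) dominated by the Riesz kernel $|x|^{-n/\tau}$, and the boundedness then yields the desired power of $|t|$. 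Your route via atomic decomposition of $H^1$, using the mean--zero property of atoms together with the $x$--derivative bound of Remark~\ref{rem:34} to control the Taylor remainder, is a more hands-on rendition of the same fact; it is in fact the more honest version, since mere pointwise domination of $I(t,\cdot)$ by a Riesz kernel is not by itself sufficient for an $H^1\to L^\tau$ bound (the cancellation of atoms, i.e.\ a smoothness input on the kernel, is genuinely needed). For the interior the paper quotes the Marcinkiewicz interpolation theorem (Grafakos, p.\,56) applied to the family \eqref{eq:312} together with $L^2$ unitarity, while you split along $AC$ and use complex (Fefferman--Stein) interpolation against the three vertex estimates; both are standard and give the same conclusion, and both you and the paper obtain $ACD$ from $ABC$ by duality.

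One caveat that applies as much to the paper's write-up as to yours: the interpolation step is treated as if it delivers the stated large-time exponent $n|1/q-1/p'|-\frac1m$ on all of the triangle $ABC$, but an interpolation between a point on $\overline{BC}$ (exponent $n/q_0-\frac1m$) and $A$ ($L^2$ unitarity, exponent $0$) produces, at parameter $\theta$, the exponent $(1-\theta)\bigl(n/q_0-\frac1m\bigr)$, which along $\overline{AC}$ is $-(1-\theta)/m$ rather than the constant $-1/m$; the two agree only on $\overline{BC}$ itself. If you intend the convexity/interpolation argument to be the proof mechanism, you should either verify directly that the sharper $-1/m$ decay holds in the interior (it does not follow from Riesz--Thorin or Marcinkiewicz with the stated endpoint bounds) or record the weaker interpolated exponent; as written, the last paragraph of your proof asserts more than the interpolation yields.
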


\begin{proof}
 When $({1\over p},{1\over q})$ lies in the edge BC,
but $({1\over p},{1\over q})\ne{\rm B}$ (i.e., $p=1$ and
$\tau<q\le\infty$), it follows from Young's inequality and Theorem
\ref{thm:31} that
\begin{equation} \label{eq:312}
\|e^{itP(D)}u_0\|_{L^q}\le\| {\F}^{-1}(e^{itP})\|_{L^q} \|u_0\|_{L^1}\le
\left\{
\begin{array}{ll}
    C|t|^{{n\over m}({1\over q}-1)}\|u_0\|_{L^1} &\hbox{\rm for}\ \ 0<|t|\leq1, \\
    C|t|^{\frac{n}{q}-\frac1m}\|u_0\|_{L^1} &\hbox{\rm for}\ \ |t|\ge 1.\\
\end{array}
\right.
\end{equation}

When $({1\over p},{1\over q})={\rm B}$ (i.e., $p=1$ and $q=\tau$),
this estimate (with $L^1$ replaced by $H^1$) follows from the
boundedness of the Riesz potential $I_{n/\tau'}$ (cf. \cite{St}, p.136).
This proves the points $(1,{1\over q})$ in the side $\overline{CB}$.
Now in view of \eqref{eq:312}, by the Marcinkiewicz
interpolation theorem (see \cite{G}, p.56), we can conclude the proof of
\eqref{eq:311} for the points in the closed triangle $ABC$. Next, by duality the
desired arguments for the triangle $ADC$ follow immediately from
the results in the triangle $ABC$. This completes the
 proof of the theorem.
\end{proof}

\begin{rem} \label{rem:36}
Let $\Omega=\{\xi\in {\bf R}^n : |\xi|> a\}$ for some sufficiently
large $a$ with supp${\F}u_0\subset \Omega$. Also let $(p,q)\in\triangle$, but
$(p,q)\ne(1,\tau),\ (\tau',\infty)$.
First we note that \eqref{I1*} and \eqref{I1-decay} combine into
$$
|I_1(t,x)|\leq  Ct^{-\frac{n}{2}}(1+|t|^{-1}|x|)^{-\mu}\leq C|t|^{-{n\over m}}(1+|t|^{-{1\over m}}|x|)^{-\mu} \ {\rm for}\ \ |t|\ge 1.
$$
Similarly to the above proof, this  estimate implies
\begin{equation} \label{eq:313}
\|e^{itP(D)}u_0\|_{L^q}=\|I_1(t,\cdot )\ast u_0\|_{L^q}\le
 C|t|^{{n\over m}({1\over q}-{1\over p})}\|u_0\|_{L^p}  \  {\rm for}  \ \ |t|>0.\\
\end{equation}
When $(p,q)=(1,\tau)$ (resp.
$(\tau',\infty)$), \eqref{eq:313} still holds if $L^1$ (resp. $L^\infty$) is
replaced by $H^1$ (resp. BMO).
\end{rem}

\vskip1cm


\begin{thebibliography}{10}
\bibitem{BE}
M.~Balabane and H.~A. Emami-Rad, \emph{$L^{p}$ estimates for
Schr\"{o}dinger evolution equations}, Trans. Amer. Math. Soc.,
 \textbf{120} (1985), 357-373.
\bibitem{C1}
S.~Cui, \emph{Point-wise estimates for a class of oscillatory
integrals and related $L^{p}-L^{q}$ estimates}, J. Fourier Anal. and
Appl., \textbf{11} (2005), 441-457.
\bibitem{C2}
\bysame, \emph{Point-wise estimates for oscillatory integrals and
related $L^{p}-L^{q}$ estimates: Multi-dimensional case}, J. Fourier
Anal. and Appl., \textbf{12} (2006), 605-627.
\bibitem{DY}
Y.~Ding and X.~Yao, \emph{$L^{p}-L^{q}$ estimates for dispersive equations and related applications,} J. Math. Anal. and Appl. \textbf{356}  (2009), 711-728.
\bibitem {G}
\ L.~Grafakos, \emph{Classical and modern Fourier analysis}, Prentice Hall,  New
 Jersey, 2003
\bibitem{KPV}
C. E. ~Kenig, G. ~Ponce and L. ~Vega, \emph{Oscillatory integrals
and regularity of dispersive
         equations}, Indiana Univ. J., \textbf{40} (1991),33-69.
\bibitem{Miy}
A. ~Miyachi, \emph{On some estimates for the wave equation in $L^{p}$
and $H^{p}$}, J. Fac. Sci. Univ. Tokyo, \textbf{27} (1980), 231-354.

\bibitem{So}
C. D. ~Sogge, \emph{Fourier Integrals in Classical Analysis,}
Cambridge Univ. Press, Cambridge, 1993.
\bibitem{St}
E. M. ~Stein, \emph{Harmonic Analysis: Real Variable Method,
Orthogonality and Oscillatory Integrals}, Princeton Univ. Press, New
Jersey, 1993.
\bibitem{YZ}
X.~Yao and Q.~Zheng, \emph{Oscillatory integrals and $L^{p}$estimates for
Schr\"odinger equations}, J. Diff. Eq. \textbf{244}  (2008), 741-752.
\bibitem{ZYF}
Q. ~Zheng, X. ~Yao and D. ~Fan, \emph{Convex hypersurface and $L^{p}$
estimates for Schr\"{o}dinger equations,} J. Func. Anal.
\textbf{208} (2004), 122-139.

\end{thebibliography}
\end{document}